\documentclass[10pt]{article}
\usepackage{amsmath, amsthm, amssymb, amsfonts, mathrsfs, mathtools}
\usepackage{graphicx}
\usepackage{makeidx}
\usepackage[left=2cm,right=2cm,top=2cm,bottom=2cm]{geometry}

\usepackage{caption}
\usepackage{subcaption}

\usepackage{tikz}
\usetikzlibrary{decorations.pathreplacing}
\tikzstyle{vertex}=[auto=left,circle,draw=black,fill=white, inner sep=1.5]
\usepackage{enumitem}
\usepackage{makecell}
\usepackage{blindtext}

\usepackage{hyperref}
\hypersetup{colorlinks=true, citecolor={blue}, linkcolor=blue, filecolor=magenta, urlcolor=cyan}

\newtheorem{theorem}{Theorem}

\newtheorem{lem}[theorem]{Lemma}

\newtheorem{prop}{Proposition}

\newtheorem{definition}{Definition}

\newtheorem{conjecture}{Conjecture}

\textheight=22cm \textwidth=16cm \oddsidemargin=0.2in
\evensidemargin=0.2in \topmargin=-0.25in

\title{On the \emph{rna} number of powers of cycles}

\author{ Deepak Sehrawat$^{\ast}$, Anil Kumar, Sweta Ahlawat\\
Department of Mathematics\\
Pandit Neki Ram Sharma Government College Rohtak\\
Rohtak, India - 124001\\
$^{\ast}$Email: deepakssehrawat5@gmail.com
}
\date{}

\begin{document}
\maketitle

\vspace{-0.3in}

\vspace*{0.3in}
\noindent
\textbf{Abstract.} A signed graph $(G,\sigma)$ on $n$ vertices is called a \textit{parity signed graph} if there is a bijective mapping $f \colon V(G) \rightarrow \{1,\ldots,n\}$ such that $f(u)$ and $f(v)$ have same parity if $\sigma(uv)=1$, and opposite parities if $\sigma(uv)=-1$ for each edge $uv$ in $G$. The \emph{rna} number $\sigma^{-}(G)$ of $G$ is the least number of negative edges among all possible parity signed graphs over $G$. In other words, $\sigma^{-}(G)$ is the smallest size of an edge-cut of $G$ such that the sizes of two sides differ at most one.

Let $C_n^{d}$ be the $d\text{th}$ power of a cycle of order $n$. Recently, Acharya, Kureethara and Zaslavsky proved that the \emph{rna} number of a cycle $C_n$ on $n$ vertices is $2$. In this paper, we show for $2 \leq d < \lfloor \frac{n}{2} \rfloor$ that $2d \leq \sigma^{-}(C_n^{d}) \leq d(d+1)$. Moreover, we prove that the graphs $C_n^{2}$ and $C_n^{3}$ achieve the upper bound of $d(d+1)$.

\vspace*{0.15in}
\noindent
\textbf{AMS subject classifications.} 05C22, 05C38,  05C40, 05C78 

\vspace*{0.1in}
\noindent
\textbf{Key words.} power of a cycle, parity labeling, parity signed graph, \emph{rna} number, edge-cut

\section{Introduction}\label{intro}

In this paper, all the graphs and signed graphs are considered to be simple, connected and undirected. For any graph theoretic term that is used but not defined in this paper, we refer the reader to ~\cite{Bondy}. If $G$ is a graph, then we denote the set of vertices and the set of edges of $G$ by $V(G)$ and $E(G)$, respectively, and the distance between vertices $u$ and $v$ in $G$ by $\text{dist}_{G}(u,v)$.

A \textit{signed graph} $(G,\sigma)$ consists of a graph $G$ together with a function $\sigma \colon E(G) \rightarrow \{1, -1\}$. In $(G,\sigma)$, graph $G$ is called the \textit{underlying graph} of $(G,\sigma)$ and $\sigma$ a \textit{signature} of $(G,\sigma)$. We say an edge $e$ in $(G,\sigma)$ is \textit{positive} if $\sigma(e)=1$, and \textit{negative} otherwise. By $(G,+)$, we denote a signed graph $(G,\sigma)$ such that $\sigma(e)=1$ for each $e \in E(G)$. 

A special type of signed graph known as \textit{parity signed graph} (see Definition~\ref{def of parity labeling and parity signed graph}) introduced by Acharya and Kureethara in \cite{Acharya1}. Acharya \textit{et al.}~\cite{Acharya2} further characterized some families of parity signed graphs. Parity labeling of a graph is equivalent to a partition of the vertex set of a graph into two subsets $A$ and $B$ such that $||A|-|B|| \leq 1$. Such vertex partition is known as a \textit{Harary partition}.

The \emph{rna} number of a graph $G$, denoted $\sigma^{-}(G)$, is the least number of negative egdes among all the possible parity signed graphs over $G$. In other words, it is the least size of a cut whose sides are nearly equal. The concept of the \emph{rna} number of a parity signed graph was also introduced by Acharya and Kureethara~\cite{Acharya1}. Due to its definition, it is more reasonable to say the \emph{rna} number of a graph.  The \emph{rna} numbers of some families of graphs such as stars, wheels, paths, cycles and complete graphs are computed in~\cite{Acharya1, Acharya2}. Kang et al. \cite{Kang} proved that $\sigma^{-}(G) \leq \lfloor \frac{2m+n}{4} \rfloor$, where $G$ is a graph on $n$ vertices with $m$ edges. Further, they characterized all the parity signed graphs achieving the bound $\lfloor \frac{2m+n}{4} \rfloor$. They also solved some open problems concerning the \emph{rna} number. Very recently, Sehrawat and Bhattacharjya~\cite{Sehrawat} studied the \emph{rna} number for the class of generalized Petersen graphs. 

The \textit{dth power} of a simple graph $G$ is the graph $G^d$ whose vertex set is $V(G)$ and two distinct vertices $u,v$ are adjacent in $G^d$ if and only if $1 \leq \text{dist}_{G}(u,v) \leq d$. It is important to note that if $C_n$ is a cycle on $n$ vertices, then for $d \geq \lfloor \frac{n}{2} \rfloor$ the graph $C_n^{d}$ is the complete graph $K_n$. This holds true because $\text{dist}_{C_n}(u,v) \leq \lfloor \frac{n}{2} \rfloor$ for any two vertices $u$ and $v$ in $C_n$. It is known from \cite[Proposition 2.8]{Acharya1} that $\sigma^{-}(K_n) = \lfloor \frac{n}{2} \rfloor \lceil \frac{n}{2} \rceil$, so $\sigma^{-}(C_n^{d}) = \lfloor \frac{n}{2} \rfloor \lceil \frac{n}{2} \rceil$ for all $d \geq \lfloor \frac{n}{2} \rfloor$. Also it is known from \cite[Theorem 3.2]{Acharya2} that $\sigma^{-}(C_n^{1}) =2$. Thus it remains to compute the \emph{rna} number of $C_n^{d}$ for $2 \leq d < \lfloor \frac{n}{2} \rfloor$.

In this paper, we show for $2 \leq d < \lfloor \frac{n}{2} \rfloor$ that $2d \leq \sigma^{-}(C_n^{d}) \leq d(d+1)$ improving the upper bound of Kang et al. \cite{Kang} for the graph $C_n^{d}$. Moreover, we show that the \emph{rna} numbers of $C_n^{2}$ and $C_n^{3}$ are $6$ and $12$, respectively, achieving the upper bound of $d(d+1)$. For the remaining values of $d$, we make a conjecture that $\sigma^{-}(C_n^{d}) = d(d+1).$

\section{Preliminaries}\label{prelim}

We start this section with some necessary definitions.

\begin{definition}\cite{Acharya2}\label{def of parity labeling and parity signed graph}
\rm{For a given graph $G$ of order $n$ and a bijective mapping $f \colon V(G) \rightarrow \{1,\ldots ,n\}$, define $\sigma_{f} \colon E(G) \rightarrow \{1,-1\}$ such that $\sigma_{f}(uv)=1$ if $f(u)$ and $f(v)$ are of the same parity and $\sigma_{f}(uv)=-1$ if $f(u)$ and $f(v)$ are of different parity, where $uv \in E(G)$. We define $\Sigma_{f}$ to be the signed graph $(G,\sigma_{f})$, which is called a \textit{parity signed graph}.} 
\end{definition}

A cycle in a signed graph is said to be \textit{positive} if it has even number of negative edges, and \textit{negative}, otherwise. A signed graph is said to be \textit{balanced} if all of its cycles are positive. Every parity signed graph is balanced, see~\cite[Theorem 1]{Acharya2}.

\begin{definition}\cite{Acharya1}
\rm{The \emph{rna} number of a graph $G$, denoted $\sigma^{-}(G)$, is the least number of negative edges among all possible parity signed graphs over $G$.} 
\end{definition}

If $(G,\sigma_{f})$ is a parity signed graph, then the sets defined by $V_1 \coloneqq \{v \in V(G)~|~f(v)= \text{odd}\}$ and $V_2 \coloneqq \{v \in V(G)~|~f(v)=\text{even}\}$ are called \textit{parity sets}. Such a bipartition $\{V_1,V_2\}$ of $V(G)$ is said to be \textit{parity-partition} and an edge cut $(V_1,V_2)$ is called an \textit{equicut} of $G$. Observe that finding the \emph{rna} number of a graph $G$ is equivalent to finding the smallest equicut size.

Switching is a way of turning one signed graph into another signed graph. If $(G,\sigma)$ is a signed graph and $v$ is a vertex in $(G,\sigma)$, then \textit{switching} $v$ reverses the signs of all edges incident to $v$. Further, two signed graphs $(G,\sigma)$ and $(G,\sigma^\prime)$ are \textit{switching-equivalent} if one can be obtained from the other by a sequence of switchinges. 

For the study of parity signed graphs, a different type of switching is defined by Kang \textit{et al.} \cite{Kang}. Let $(G,\sigma)$ be a parity signed graph. A \textit{parity-switching} in $(G,\sigma)$ is a switching of a pair of vertices locating in different parity sets.

The following proposition tells us how we can obtain all parity signed graphs over any given graph $G$. 

\begin{prop}\cite{Kang}
A signed graph $(G, \sigma)$ on $n$ vertices is a parity signed graph if and only if it can be obtained from $(G, +)$ by switching a set of vertices of cardinality $\lfloor \frac{n}{2} \rfloor$.
\end{prop}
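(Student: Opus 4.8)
The plan is to prove both directions by translating each notion into a statement about the edge-cut induced by a vertex bipartition. First I would record the basic effect of switching a vertex set: starting from the all-positive signature $(G,+)$ and switching every vertex of a set $S \subseteq V(G)$, an edge $uv$ has its sign reversed once for each endpoint lying in $S$; since reversing twice restores the original sign, the resulting signature $\sigma_S$ satisfies $\sigma_S(uv)=-1$ exactly when precisely one of $u,v$ belongs to $S$, and $\sigma_S(uv)=1$ otherwise. In other words, the negative edges of the switched graph are precisely the edges of the cut $(S,\, V(G)\setminus S)$. I would also note that since a bijection $f\colon V(G)\to\{1,\dots,n\}$ uses $\lfloor n/2\rfloor$ even values and $\lceil n/2\rceil$ odd values, the parity set $V_2$ of even-labelled vertices always has cardinality $\lfloor n/2\rfloor$.

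For the forward direction, suppose $(G,\sigma)$ is a parity signed graph, so $\sigma=\sigma_f$ for some bijection $f$. By the definition of $\sigma_f$, an edge $uv$ is negative exactly when $f(u),f(v)$ have opposite parity, i.e. exactly when one endpoint lies in $V_2$ and the other in $V_1$. Thus the negative edges of $(G,\sigma)$ are precisely the cut $(V_2,\, V(G)\setminus V_2)$. Taking $S=V_2$ in the computation above shows $\sigma=\sigma_S$, and since $|V_2|=\lfloor n/2\rfloor$, the signed graph $(G,\sigma)$ is obtained from $(G,+)$ by switching a set of cardinality $\lfloor n/2\rfloor$.

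For the converse, suppose $(G,\sigma)$ arises from $(G,+)$ by switching a set $S$ with $|S|=\lfloor n/2\rfloor$; then $|V(G)\setminus S|=\lceil n/2\rceil$. I would build a labeling $f$ by assigning the $\lfloor n/2\rfloor$ even numbers in $\{1,\dots,n\}$ bijectively to the vertices of $S$ and the $\lceil n/2\rceil$ odd numbers bijectively to the vertices of $V(G)\setminus S$; the counts match exactly, so $f$ is a bijection. Two adjacent vertices then receive labels of opposite parity precisely when exactly one of them lies in $S$, which by the switching computation is exactly when their edge is negative in $(G,\sigma)$. Hence $\sigma=\sigma_f$ and $(G,\sigma)$ is a parity signed graph.

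The only real content is the first, bookkeeping step identifying ``switch the set $S$'' with ``negative edges $=$ cut $(S,\,V(G)\setminus S)$''; once that dictionary is in place both implications are immediate. The one point worth keeping in mind is the asymmetry between odd and even labels when $n$ is odd, which is exactly why the prescribed cardinality is $\lfloor n/2\rfloor$ (the number of even labels) rather than $\lceil n/2\rceil$. Since switching $S$ and switching its complement yield the same signed graph, either parity set would in fact serve, but choosing the even-labelled set $V_2$ gives precisely the stated cardinality.
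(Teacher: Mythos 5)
Your proof is correct: the dictionary ``negative edges of the switched signature $=$ edges of the cut $(S, V(G)\setminus S)$'' together with the observation that $\{1,\dots,n\}$ contains exactly $\lfloor n/2\rfloor$ even labels gives both directions cleanly. Note that the paper itself states this proposition as a cited result from Kang et al.\ without proof, so there is no in-paper argument to compare against; your argument is the standard one and fills that gap correctly.
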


Kang \textit{et al.} also proved that parity-switching is the exact way to describe the transformation between any two parity signed graphs having the same underlying graph. More precisely:

\begin{prop}\cite{Kang}
Let $(G, \sigma)$ be a parity signed graph. Then a signed graph $(G, \sigma^\prime)$ is a parity signed graph if and only if it can be obtained from $(G, \sigma)$ by a sequence of parity-switchinges.
\end{prop}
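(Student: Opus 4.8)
The plan is to recast everything in the language of Harary partitions, under which a parity-switching becomes nothing more than the interchange of two vertices lying in opposite parts. First I would record the dictionary: a parity signed graph $(G,\sigma)$ is completely determined, up to interchanging the two parts (harmless since $G$ is connected), by its parity sets $\{V_1,V_2\}$, a partition with $||V_1|-|V_2|| \leq 1$ in which an edge is negative exactly when its endpoints lie in different parts. Since $V_1$ consists of the odd labels, it has size $\lceil n/2\rceil$ for every parity labeling.

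The technical heart is to check that one parity-switching realizes one partition swap. If $u\in V_1$, $w\in V_2$ and we switch the pair $\{u,w\}$, then every edge with exactly one endpoint in $\{u,w\}$ is reversed, while the edge $uw$, if present, is reversed twice and so is unchanged. I would compare this outcome, case by case (edges meeting exactly one of $u,w$; the edge $uw$; edges meeting neither), with the signature induced by the partition $\{(V_1\setminus\{u\})\cup\{w\},\,(V_2\setminus\{w\})\cup\{u\}\}$, and confirm that the two signatures coincide. As this new partition keeps the two part sizes, it is again a Harary partition, so a parity-switching sends a parity signed graph to a parity signed graph. Induction on the length of a switching sequence then yields the ``if'' direction: anything obtained from $(G,\sigma)$ by parity-switchings is a parity signed graph.

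For the ``only if'' direction I would show that the partition of any parity signed graph $(G,\sigma')$ can be reached from that of $(G,\sigma)$ by such swaps. Writing the parity sets as $\{V_1,V_2\}$ and $\{V_1',V_2'\}$, note $|V_1|=|V_1'|=\lceil n/2\rceil$; set $X=V_1\setminus V_1'$ and $Y=V_1'\setminus V_1$, so that $|X|=|Y|$, and write $k=|X|$. Enumerating $X=\{x_1,\dots,x_k\}$ and $Y=\{y_1,\dots,y_k\}$, I would perform the parity-switchings swapping $x_i$ with $y_i$ in turn. Before the $i$-th step $x_i$ is still on the $V_1$-side and $y_i$ still on the $V_2$-side, since the earlier steps moved only $x_1,\dots,x_{i-1}$ and $y_1,\dots,y_{i-1}$; hence each swap is a legitimate parity-switching. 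After all $k$ steps the first part has become $(V_1\setminus X)\cup Y=V_1'$, which gives exactly the signature $\sigma'$.

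The whole argument is routine, and I expect the only delicate point to be the verification in the second paragraph: getting the bookkeeping right for the doubly-flipped edge $uw$, and invoking connectedness so that a parity signature pins down its partition up to interchange of the two parts. Everything else is a standard sorting-by-transpositions argument.
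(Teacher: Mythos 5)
Your argument is correct and complete; note, though, that the paper does not prove this proposition at all --- it is quoted from Kang, Chen and Jin \cite{Kang} without proof, so there is no in-paper argument to compare against. Your reduction to Harary partitions, the case check that switching a crossing pair $\{u,w\}$ (with the edge $uw$ flipped twice, hence unchanged) reproduces exactly the signature of the swapped partition, and the sorting-by-transpositions argument on $X=V_1\setminus V_1'$ and $Y=V_1'\setminus V_1$ with $|X|=|Y|$ all go through; the only inessential point is your appeal to connectedness to recover the partition from the signature, which your proof never actually uses, since you only ever need the direction ``partition determines signature'' and you carry the partition along explicitly through the switching sequence.
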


The following result, due to Sehrawat and Bhattacharjya, gives a lower bound for the \emph{rna} number of a graph in terms of its edge-connectivity. 

\begin{prop}\cite{Sehrawat} \label{Prop-rna no and edge-connect}
If $G$ is a graph with edge-connectivity $\lambda$, then  $\sigma^{-}(G) \geq \lambda$.
\end{prop}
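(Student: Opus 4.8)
The plan is to exploit the combinatorial reformulation of the \emph{rna} number recorded earlier in the excerpt: $\sigma^{-}(G)$ equals the minimum size of an equicut of $G$, that is, the least number of edges crossing a bipartition $\{V_1, V_2\}$ of $V(G)$ with $||V_1| - |V_2|| \leq 1$. On the other side, for a connected graph the edge-connectivity $\lambda$ is the minimum number of edges having exactly one endpoint in $S$, taken over all subsets $S$ with $\emptyset \neq S \subsetneq V(G)$, since a minimal disconnecting edge set is always the set of edges leaving some such $S$. The whole argument is just a comparison of these two minima: the equicuts form a subfamily of all vertex bipartitions, so the smallest equicut cannot be smaller than $\lambda$.

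Concretely, I would fix a parity signed graph $(G, \sigma_f)$ attaining $\sigma^{-}(G)$, with parity sets $V_1$ and $V_2$. By the definition of $\sigma_f$, the negative edges of $(G, \sigma_f)$ are precisely the edges joining $V_1$ to $V_2$, so their number equals the size of the edge cut determined by $V_1$. The key step is then to observe that deleting exactly these edges leaves no edge between $V_1$ and $V_2$ and hence disconnects $G$; thus this edge set is a disconnecting set, and its cardinality is at least $\lambda$. Since this cut realizes $\sigma^{-}(G)$, we obtain $\sigma^{-}(G) \geq \lambda$.

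There is essentially no hard part here; the only point needing a moment's care is to confirm that an equicut genuinely separates $V(G)$ into two nonempty blocks, so that it qualifies as a disconnecting edge set that may legitimately be compared against $\lambda$. This holds because the balance condition forces each parity set to contain at least $\lfloor n/2 \rfloor \geq 1$ vertices whenever $n \geq 2$; for $n = 1$ the graph has no edges and the statement is vacuous.
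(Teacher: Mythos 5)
Your argument is correct. The paper states this proposition without proof (it is imported from \cite{Sehrawat}), but your reasoning --- $\sigma^{-}(G)$ is the minimum size of an equicut, every equicut has both sides nonempty for $n\geq 2$, hence is a disconnecting edge set of size at least $\lambda$ --- is precisely the standard argument behind the cited result, and it uses the same equicut reformulation the paper itself records in Section~\ref{prelim}.
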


\section{Our results}\label{Sec: main results}
A simple but important result is the following.

\begin{lem}
Let $H$ be a spanning subgraph of a graph $G$ with $\sigma^{-}(H)=r$. Then $\sigma^{-}(G) \geq r$.
\end{lem}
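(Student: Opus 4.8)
The plan is to exploit the equicut reformulation of the \emph{rna} number together with the fact that deleting edges can only shrink any fixed cut. Recall that computing $\sigma^{-}(G)$ amounts to minimizing, over all bipartitions $\{V_1,V_2\}$ of $V(G)$ with $||V_1|-|V_2|| \leq 1$, the number of edges of $G$ joining $V_1$ to $V_2$; each such balanced bipartition arises from a parity labeling $f \colon V(G) \to \{1,\ldots,n\}$ by letting $V_1$ (resp. $V_2$) collect the odd-labeled (resp. even-labeled) vertices, and the negative edges of $(G,\sigma_f)$ are precisely the edges crossing the cut.

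First I would fix an optimal parity labeling $f$ of $G$, that is, one whose parity signed graph $(G,\sigma_f)$ has exactly $\sigma^{-}(G)$ negative edges, and let $\{V_1,V_2\}$ be the induced parity-partition of $V(G)$. Since $H$ is a spanning subgraph, $V(H)=V(G)$, so the very same bijection $f$ is a legitimate parity labeling of $H$; it yields a parity signed graph $(H,\sigma_f)$ whose parity-partition is again $\{V_1,V_2\}$.

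Next I would compare the two cuts. The negative edges of $(H,\sigma_f)$ are exactly the edges of $H$ with one endpoint in $V_1$ and the other in $V_2$. Because $E(H)\subseteq E(G)$, these crossing edges of $H$ form a subset of the crossing edges of $G$, so their number is at most the number of crossing edges of $G$, which equals $\sigma^{-}(G)$. Hence $(H,\sigma_f)$ is a parity signed graph over $H$ with at most $\sigma^{-}(G)$ negative edges, and therefore $\sigma^{-}(H) \leq \sigma^{-}(G)$. Since $\sigma^{-}(H)=r$ by hypothesis, we conclude $\sigma^{-}(G) \geq r$.

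There is essentially no hard step here: the only points to be careful about are that passing from $G$ to the spanning subgraph $H$ preserves the vertex set, so a balanced bipartition of $G$ is automatically a balanced bipartition of $H$, and that edge deletion is monotone on cut sizes. The statement is thus just the monotonicity of the \emph{rna} number under edge addition when the vertex set is held fixed, and I would expect the write-up to be a short paragraph rather than a multi-step argument.
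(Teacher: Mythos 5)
Your argument is correct and is exactly the paper's proof, just spelled out in more detail: the paper simply notes that for any balanced bipartition, the cut size in $G$ is at least the cut size in $H$ (since $E(H)\subseteq E(G)$ and $V(H)=V(G)$), and concludes. No gap and no difference in approach.
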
 
\begin{proof}
The size of each equicut of $G$ is at least the size of equicut of $H$. Thus the result follows.
\end{proof}

Let $S=\{a_1, \ldots, a_k\}$ be a set of positive integers such that $a_1 < a_2 < \cdots < a_k < \frac{p+1}{2}$ and let the vertices of a graph of order $p$ be labelled with $u_{0},u_{1},\ldots ,u_{p-1}$. Then the \textit{circulant graph} $C(p,S)$ has its vertex set as $\{u_{0},u_{1},\ldots ,u_{p-1}\}$ and vertices $u_{i \pm a_1}, u_{i \pm a_2}, \ldots, u_{i \pm a_k}$ are adjacent to each vertex $u_{i}$ (where summation in the indices are taken modulo $p$). In this definition, the sequence $(a_i)$ is called the \textit{jump sequence} and the $a_i$ are called \textit{jumps}. Further, if $a_k \neq \frac{p}{2}$, then $C(p,S)$ is a regular graph of degree $2k$ and if $a_k = \frac{p}{2}$, then $C(p,S)$ is a regular graph of degree $2k-1$. 

From here onwards, we assume that the vertex set of the cycle $C_n$ is $\{u_{0},u_{1},\ldots ,u_{n-1}\}$. Observe that for any $d$ such that $2 \leq d < \lfloor \frac{n}{2} \rfloor$ the graph $C_n^{d}$ is equivalent to the circulant graph $C(n,\{1,2,\ldots,d\})$. It is well-known that every circulant graph is vertex-transitive. Thus for each $2 \leq d < \lfloor \frac{n}{2} \rfloor$, the graph $C_n^{d}$ is also vertex-transitive. Mader~\cite{Mader} has been shown that for a connected vertex-transitive graph $G$, the edge-connectivity of $G$ is same as its minimum degree. We will use this fact to obtain a lower bound for the \emph{rna} number of $C_n^{d}$.

If $X$ is a set of $k+1$ consecutive vertices $u_{0},u_{1},\ldots ,u_{k}$ then the vertex (respectively, vertices) $u_{\frac{k}{2}}$ ($u_{\lfloor \frac{k}{2} \rfloor}~\&~u_{\lfloor \frac{k}{2} \rfloor +1}$) is (are) called \textit{mid-vertex (mid-vertices)} of $X$, according as $k$ is even (odd). If $v$ is a vertex in $G$ and $X$ is a subset of vertices of $G$ such that $v \notin X$, then for simplicity, we denote the edge-cut $E(\{v\}, X)$ and its size $|E(\{v\}, X)|$ by $(v, X)$ and $|(v, X)|$, respectively.

 Now we are ready to prove our main result.

\begin{theorem}\label{Thm-upper bound on rna no}
If $2 \leq d < \lfloor \frac{n}{2} \rfloor$, then $2d \leq \sigma^{-}(C_n^{d}) \leq d(d+1)$.
\end{theorem}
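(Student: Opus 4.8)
The plan is to prove the two inequalities separately, since they rely on quite different pieces of machinery assembled in the previous section. The lower bound $\sigma^{-}(C_n^{d}) \geq 2d$ will follow essentially for free from the cited structural results, while the upper bound $\sigma^{-}(C_n^{d}) \leq d(d+1)$ will require exhibiting one explicit equicut and counting its edges.

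For the lower bound, I would argue as follows. Under the hypothesis $2 \leq d < \lfloor \frac{n}{2} \rfloor$, the graph $C_n^{d}$ coincides with the circulant graph $C(n,\{1,2,\ldots,d\})$, whose largest jump is $a_k = d \neq \frac{n}{2}$; hence it is a regular graph of degree $2d$, and in particular its minimum degree is $2d$. Being a circulant, $C_n^{d}$ is vertex-transitive, so by Mader's theorem its edge-connectivity $\lambda$ equals its minimum degree, giving $\lambda = 2d$. Proposition~\ref{Prop-rna no and edge-connect} then yields $\sigma^{-}(C_n^{d}) \geq \lambda = 2d$ directly, with no further computation.

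For the upper bound, I would produce a single Harary partition of the required size. The natural candidate is the \emph{consecutive-arc} partition: set $V_1 = \{u_0, u_1, \ldots, u_{\lceil n/2 \rceil - 1}\}$ and $V_2 = \{u_{\lceil n/2 \rceil}, \ldots, u_{n-1}\}$. Since $\bigl| |V_1| - |V_2| \bigr| \leq 1$, this is a Harary partition and therefore realizes a parity signed graph, obtained by assigning the odd labels to $V_1$ and the even labels to $V_2$; the negative edges are exactly the edges of $C_n^{d}$ crossing between $V_1$ and $V_2$. It then remains to count these crossing edges. Because $C_n$ is a cycle, the two arcs meet at exactly two boundaries: one between $u_{\lceil n/2 \rceil - 1}$ and $u_{\lceil n/2 \rceil}$, and one (via wraparound) between $u_{n-1}$ and $u_0$. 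At a single boundary, an edge of length $j$ (a pair $u_i u_{i+j}$ with $1 \leq j \leq d$) crosses precisely when its left endpoint occupies one of the last $j$ positions before that boundary, so there are exactly $j$ crossing edges for each $j$, hence $\sum_{j=1}^{d} j = \tfrac{d(d+1)}{2}$ edges per boundary. Summing over the two boundaries gives $2 \cdot \tfrac{d(d+1)}{2} = d(d+1)$, so this equicut has size $d(d+1)$ and $\sigma^{-}(C_n^{d}) \leq d(d+1)$.

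The only step requiring genuine care, and the point I would flag as the main (if modest) obstacle, is justifying that the per-boundary count $\tfrac{d(d+1)}{2}$ is exact and that the two boundaries contribute disjoint sets of edges. This is exactly where the hypothesis $d < \lfloor \frac{n}{2} \rfloor$ is essential: it forces each arc to have length strictly greater than $d$, so that no chord of length at most $d$ can reach from one boundary to the other or be double-counted, and the local counting at each boundary is clean. Once this is verified, both bounds combine to give $2d \leq \sigma^{-}(C_n^{d}) \leq d(d+1)$.
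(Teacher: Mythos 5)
Your proposal is correct and follows the same overall strategy as the paper: the lower bound via vertex-transitivity, Mader's theorem and Proposition~\ref{Prop-rna no and edge-connect}, and the upper bound by exhibiting the consecutive-arc equicut and showing it has exactly $d(d+1)$ edges. The only real difference is in how the cut is counted: the paper fixes $X=\{u_0,\ldots,u_{\lfloor n/2\rfloor-1}\}$ and sums the per-vertex contributions $|(u_j,X^c)|$, which forces a case split on the parity of $\lfloor \frac{n}{2}\rfloor$ and on whether $d$ exceeds roughly half of $|X|$, whereas you group the crossing edges by jump length $j$ at each of the two arc boundaries, getting $\sum_{j=1}^{d} j$ per boundary at once. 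Your organization is cleaner and correctly isolates the one point needing care, namely that $d<\lfloor \frac{n}{2}\rfloor$ gives $n>2d$, so no chord crosses at both boundaries and each boundary contributes a full set of $j$ edges for every $j\le d$; with that verified, the two counts agree and both yield $d(d+1)$.
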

\begin{proof}
Since $C_n^{d}$ is a connected vertex-transitive graph with minimum degree $2d$, the edge-connectivity of $C_n^{d}$ is $2d$. Thus the lower bound follows by Proposition~\ref{Prop-rna no and edge-connect}.

For the rest of the proof, $X$ denotes the set $\{u_{0},u_{1},\ldots ,u_{\lfloor \frac{n}{2} \rfloor -1}\}$, where $n \geq 5$ and $|X| = \lfloor \frac{n}{2} \rfloor$. We distinguish two cases depending upon $\lfloor \frac{n}{2} \rfloor$ is odd or even.\\

\textit{\textbf{Case 1.}} Assume that $\lfloor \frac{n}{2} \rfloor = 2k+1$ for some $k$. Here $X = \{u_{0},u_{1},\ldots ,u_{2k}\}$ with $|X| =2k+1$, and the mid-vertex of $X$ is $u_{k}$. It is clearly observed that the graph $C_n^{d}$ is symmetric about the line passing through the vertex $u_{k}$. Consequently, the cardinality of two sets $(u_{k-i},X^{c})$ and $(u_{k+i},X^{c})$ is same for each $1 \leq i \leq k$. Therefore we conclude that 

\begin{equation}\label{equa-1-case:1}
 |(X,X^c)| =  2 \sum_{j=0}^{k-1} |(u_{j},X^c)| + |(u_{k},X^c)|.
\end{equation}

Now we find that
\begin{equation}\label{equa-2-case:1}
 |(u_{k},X^c)| = \left\{ \begin{array}{ll}
0 & \textrm{if $2 \leq d \leq k$,}\\
2\ell & \textrm{if $d= k+\ell$ and $\ell=1,\ldots , k$} \end{array}\right. 
\end{equation}

and for $0 \leq j \leq k-1$ that

\begin{equation}\label{equa-3-case:1}
 |(u_{j},X^c)| = \left\{ \begin{array}{ll}
d-j & \textrm{if $2 \leq d \leq k$ and $j \leq d$,}\\
0 & \textrm{if $2 \leq d \leq k$ and $j > d$,}\\
d-j & \textrm{if $d= k + \ell$, $\ell=1,\ldots ,k$ and $j+d \leq 2k$,}\\
2\ell & \textrm{if $d= k + \ell$, $\ell=1,\ldots ,k$ and $j+d \geq 2k+1$.} \end{array}\right.
\end{equation}

If $2 \leq d \leq k$, then from Equation~(\ref{equa-1-case:1}) we have 
\begin{align*}
|(X,X^c)| & = 2 \left[ \sum_{j=0}^{d} |(u_{j},X^c)| +  \sum_{j=d+1}^{k-1} |(u_{j},X^c)| \right] + 0 \\
 & =  2 \left[ \sum_{j=0}^{d} (d-j) + 0 \right] + 0\\
 & = d(d+1),
\end{align*}
where second equality uses Formulas (\ref{equa-2-case:1}) and (\ref{equa-3-case:1}).

If $d = k+\ell$ and $\ell = 1, \ldots , k$, then from Equation~(\ref{equa-1-case:1}) we have  
 
\begin{align*}
|(X,X^c)| & = 2 \left[ \sum_{j=0}^{2k-d} |(u_{j},X^c)| +  \sum_{2k-d+1}^{k-1} |(u_{j},X^c)| \right] + |(u_{k},X^c)|\\
 & = 2 \left[ \sum_{j=0}^{2k-d} (d-j) + \sum_{2k-d+1}^{k-1} (2 \ell) \right] + 2 \ell \\
 & =  2 \left[ d(2k-d+1) - \frac{(2k-d)(2k-d+1)}{2} \right] + (4d-4k) (d-k+1) + (2 d-2k)\\
 & = [4dk - 2d^2 +2d - 4k^2 + 4kd - d^2 - 2k+d] + [4d^2 -8kd + 4k^2 - 4d +4k] +(2d-2k)\\
 & = d(d+1),
\end{align*}
where second equality uses Formulas (\ref{equa-2-case:1}) and (\ref{equa-3-case:1}), and third equality uses the fact that $2 \ell = 2d-2k$. Thus for all $2 \leq d \leq 2k$, we conclude that $|(X,X^c)|=d(d+1).$

\textit{\textbf{Case 2.}} Assume that $\lfloor \frac{n}{2} \rfloor = 2k$ for some $k$. Here $X = \{u_{0},u_{1},\ldots ,u_{2k-1}\}$ with $|X| =2k$, and the mid-vertices of $X$ are $u_{k-1}$ and $u_{k}$. Further, the graph $C_n^{d}$ is symmetric about the line passing through the mid-point of $u_{k-1}$ and $u_{k}$. Consequently, the cardinality of two sets $(u_{(k-1)-j},X^{c})$ and $(u_{k+j},X^{c})$ is same for each $0 \leq j \leq k-1$. Therefore we conclude that 

\begin{equation}\label{equa-1-case:2}
 |(X,X^c)| =  2 \sum_{j=0}^{k-1} |(u_{j},X^c)|.
\end{equation}

Now we find for $0 \leq j \leq k-1$ that 

\begin{equation}\label{equa-2-case:2}
 |(u_{j},X^c)| = \left\{ \begin{array}{ll}
0 & \textrm{if $2 \leq d \leq k-1$ and $j \geq d$}\\
d-j & \textrm{if $2 \leq d \leq k-1$ and $j < d$}  \\

d-j & \textrm{if $d= (k-1)+\ell$, $\ell=1,\ldots ,k$ and $j+d \leq 2k-1$}\\

2\ell-1 & \textrm{if $d= (k-1)+\ell$, $\ell=1,\ldots ,k$ and $j+d \geq 2k$.} \end{array}\right. 
\end{equation}

If $2 \leq d \leq k-1$, then from Equation~(\ref{equa-1-case:2}) we have 
\begin{align*}
|(X,X^c)| & = 2 \sum_{j=0}^{k-1} |(u_{j},X^c)|  \\
 & = 2 \sum_{j=0}^{d-1} |(u_{j},X^c)| + 2 \sum_{j=d}^{k-1} |(u_{j},X^c)| \\
 & = 2 \sum_{j=0}^{d-1} (d-j) + 0 + 0 \\
 & = d(d+1).
\end{align*}

If $d = (k-1) + \ell$ and $\ell=1,\ldots ,k$, then from Equation~(\ref{equa-1-case:2}) we have 
\begin{align*}
|(X,X^c)| & = 2 \sum_{j=0}^{k-1} |(u_{j},X^c)|  \\
 & = 2 \sum_{j=0}^{2k-d-1} |(u_{j},X^c)| + 2 \sum_{2k-d}^{k-1} |(u_{j},X^c)| \\
 & = 2 \sum_{j=0}^{2k-d-1} (d-j) + 2 \sum_{2k-d}^{k-1} (2 \ell -1) \\
 & = d(d+1).
\end{align*}

Hence for all $2 \leq d \leq 2k-1$, we have $|(X,X^c)| =d(d+1)$.

In both the cases, we have shown that the size of $(X,X^c)$ is $d(d+1)$ for $X=\{u_{0},u_{1},\ldots ,u_{\lfloor \frac{n}{2} \rfloor -1}\}$. Thus we get, $\sigma^{-}(C_n^{d}) \leq d(d+1)$. This completes the proof.
\end{proof}

\subsection{The \emph{rna} number of $C_n^{2}$}

\begin{theorem}\label{Thm: rna no of C_n^2}
If $n \geq 6$, then $\sigma^{-}(C_n^2) = 6$.
\end{theorem}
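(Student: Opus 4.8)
The upper bound $\sigma^-(C_n^2) \le 6$ is already in hand: for $d=2$ the hypothesis $n \ge 6$ is exactly the condition $2 \le d < \lfloor n/2 \rfloor$, so Theorem~\ref{Thm-upper bound on rna no} gives $\sigma^-(C_n^2) \le d(d+1) = 6$, and in this regime $C_n^2$ is the circulant graph $C(n,\{1,2\})$. The whole content of the statement is therefore the matching lower bound, and here the generic bound $\sigma^-(C_n^2) \ge 2d = 4$ coming from edge-connectivity (Proposition~\ref{Prop-rna no and edge-connect}) does not suffice: I must gain two more edges. The plan is to analyse an arbitrary equicut combinatorially and show every equicut of $C_n^2$ has size at least $6$.

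Fix a parity-partition $\{V_1,V_2\}$ and read the vertices $u_0,\dots,u_{n-1}$ cyclically, viewing the partition as a $2$-colouring. I would decompose the cycle into maximal monochromatic \emph{blocks} (runs of consecutive equal-colour vertices); since the colours alternate from block to block there is an even number $2b$ of them, $b$ of each colour. Call a block \emph{short} if it has length $1$ and \emph{long} if it has length $\ge 2$, and let $b_2$ be the number of long blocks. The key step is to express the equicut size exactly in these terms. A distance-$1$ edge $u_i u_{i+1}$ is cut precisely when it sits at a block boundary, so the number of cut distance-$1$ edges equals the number of boundaries, namely $2b$. A distance-$2$ edge $u_i u_{i+2}$ is cut iff exactly one of the two gaps it spans is a boundary; tracking these, I would show that each long block contributes exactly two cut distance-$2$ edges (one reaching in at each end) while each short block contributes none, giving $2b_2$ such edges in total. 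Hence the equicut size equals $2b + 2b_2 = 2(b+b_2)$, and it suffices to prove $b + b_2 \ge 3$.

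The conclusion then follows by a short case analysis on the number of blocks. If $b \ge 3$, the inequality $b+b_2 \ge 3$ is immediate. If $b = 1$, there are exactly two blocks, each an entire parity set, so each has length $\lfloor n/2\rfloor \ge 3$ or $\lceil n/2\rceil \ge 3$; both are long, $b_2 = 2$, and $b+b_2 = 3$. If $b = 2$, there are four blocks of total length $n \ge 6 > 4$, so they cannot all be short; thus $b_2 \ge 1$ and $b+b_2 \ge 3$. In every case $b+b_2 \ge 3$, whence the equicut has size at least $6$ and $\sigma^-(C_n^2) \ge 6$, completing the proof together with the upper bound.

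The step I expect to require the most care is the exact count of cut distance-$2$ edges, that is, the claim that it equals $2b_2$: one must verify that the two end-edges of a long block are genuinely distinct and genuinely cut, that short blocks contribute nothing, and that no distance-$2$ edge is charged to two different blocks — all of which uses $n \ge 6$ to rule out degenerate wrap-around coincidences. The other point worth flagging is conceptual rather than computational: the four-block case ($b=2$) is exactly where the edge-connectivity bound of $4$ fails, and one must instead invoke the near-balance of the parts (four singletons cannot cover $n \ge 6$ vertices) to force $b_2 \ge 1$ and push the count up to $6$.
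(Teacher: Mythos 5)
Your proof is correct, but it takes a genuinely different route from the paper's. The paper decomposes $C_n^2$ into edge-disjoint cycles (the $1$-jump $n$-cycle together with either two $\tfrac{n}{2}$-cycles or one $n$-cycle of $2$-jumps, depending on the parity of $n$) and argues case by case: each cycle whose vertex set is properly split by $X$ contributes at least two cut edges, and in the delicate situation where the $1$-jump cycle contributes exactly two, $X$ is forced to be an interval and four specific $2$-jump cut edges are exhibited by hand. You instead derive an exact formula, $|(X,X^c)| = 2(b+b_2)$, from the block structure of the $2$-colouring, and then minimize $b+b_2$ using only the near-balance of the parts. Your charging argument for the distance-$2$ edges checks out: each boundary gap is spanned by exactly two distance-$2$ edges, each of which is cut precisely when the adjacent block on its side has length at least $2$, and for $n \ge 6$ no edge is charged twice, giving exactly $2b_2$. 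What your approach buys is uniformity (no case split on the parity of $n$), an exact count rather than a lower bound, and an implicit classification of the minimizing equicuts (those with $b+b_2=3$, which includes the alternating colouring of $C_6^2$ as well as the interval cut); what the paper's approach buys is a template that the authors reuse, with considerably more case analysis, for $C_n^3$ in Theorem~\ref{Thm: rna no of C_n^3}, where your exact-formula method would need the analogous but more involved count of cut distance-$3$ edges.
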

\begin{proof} We discuss the two cases depending upon the parity of $n$.

\textit{\textbf{Case 1.}} Assume that $n$ is even. In this case, $C_n^2$ is edge-disjoint union of one $n$-cycle (formed with $1$ jumps) and two $\frac{n}{2}$-cycles (formed with $2$ jumps). We denote the $n$-cycle by $C_n^{\prime} \coloneqq u_0u_1 \ldots u_{n-1}u_0$ and two $\frac{n}{2}$-cycles by $C_{\frac{n}{2}}^{\prime} \coloneqq u_1u_3 \ldots u_{n-1}u_1$ and $C_{\frac{n}{2}}^{\prime\prime} \coloneqq u_0u_2 \ldots u_{n-2}u_0$. So for any $X \subset V(C_n^2)$ such that $|X| = \frac{n}{2}$, we discuss the following two cases.

\textit{\textbf{Case 1.1:}} Let us consider $X$ contain vertices of both $C_{\frac{n}{2}}^{\prime}$ as well as $C_{\frac{n}{2}}^{\prime\prime}$. Thus the cut $(X,X^c)$ must contain at least two edges of each of the cycles $C_{\frac{n}{2}}^{\prime}$, $C_{\frac{n}{2}}^{\prime\prime}$, and $C_n^{\prime}$. Therefore, $|(X,X^c)| \geq 6$.

\textit{\textbf{Case 1.2:}} Let us consider $X$ contain all the vertices of exactly one of $C_{\frac{n}{2}}^{\prime}$ and $C_{\frac{n}{2}}^{\prime\prime}$. Thus $(X,X^c) = E(C_n)$. Consequently, $|(X,X^c)| = n \geq 6$. 

\textit{\textbf{Case 2:}} Assume that $n$ is odd. In this case, $C_n^2$ is edge-disjoint union of two $n$-cycles (one formed with $1$ jumps while other formed with $2$ jumps). We denote these two $n$-cycles by $C_{n}^{\prime}$ and $C_{n}^{\prime\prime}$, where $C_{n}^{\prime}$ and $C_{n}^{\prime\prime}$ are formed with $1$ jumps and $2$ jumps, respectively. That is, $C_{n}^{\prime} = u_0u_1 \ldots u_{n-1}u_0$ and $C_{n}^{\prime\prime} = u_0u_2 \ldots u_{n-1}u_1u_{3} \ldots u_{n-2}u_0$. 

Let $X$ be a subset of $V(C_n^2)$ such that $|X| = \lfloor \frac{n}{2} \rfloor$. Clearly, $(X,X^c)$ must contain at least two edges of each of $C_{n}^{\prime}$ and $C_{n}^{\prime\prime}$. Further, if $(X,X^c)$ contain exactly two edges of $C_{n}^{\prime}$ then $X$ must be of the form $\{u_{j}, u_{j+1}, \ldots , u_{j+ \lfloor \frac{n}{2} \rfloor -1}\}$ for some $j \in \{0,1, \ldots , n-1\}$. Consequently, $X^c = V(C_n) - X$. Thus the following edges: $$u_{j-2}u_{j}, u_{j-1}u_{j+1}, u_{j+ \lfloor \frac{n}{2} \rfloor-2}u_{j+ \lfloor \frac{n}{2} \rfloor}, u_{j+ \lfloor \frac{n}{2} \rfloor -1}u_{j+ \lfloor \frac{n}{2} \rfloor + 1}$$
of $C_{n}^{\prime\prime}$, along with two edges of $C_{n}^{\prime}$, have to belong to $(X,X^c)$. Hence $|(X,X^c)| \geq 6$.

On the other hand, if $(X,X^c)$ contain more than two edges of $C_{n}^{\prime}$, then $(X,X^c)$ has to contain at least 4 edges of $C_{n}^{\prime}$ and hence $|(X,X^c)| \geq 6$. 

Thus from Cases 1 and 2, we conclude that any equicut of $C_n^2$ has to be of size at least 6. Therefore, $\sigma^{-}(C_n^2) \geq 6$. Now consider $X=\{u_0, \ldots, u_{\lfloor \frac{n}{2} \rfloor -1}\}$. So we have $$(X,X^c) = \{u_{n-1}u_{0}, u_{n-1}u_{1}, u_{n-2}u_{0}, u_{\lfloor \frac{n}{2} \rfloor -2}u_{\lfloor \frac{n}{2} \rfloor}, u_{\lfloor \frac{n}{2} \rfloor-1}u_{\lfloor \frac{n}{2} \rfloor}, u_{\lfloor \frac{n}{2} \rfloor-1}u_{\lfloor \frac{n}{2} \rfloor +1}\}.$$ It is clear that $(X,X^c)$ is an equicut of $C_n^2$ and that $|(X,X^c)| = 6$. Hence $\sigma^{-}(C_n^2) = 6$, as desired.
\end{proof}

\subsection{The \emph{rna} number of $C_n^{3}$}
\begin{theorem}\label{Thm: rna no of C_n^3}
If $n \geq 8$, then $\sigma^{-}(C_n^3) = 12$.
\end{theorem}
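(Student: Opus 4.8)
The upper bound $\sigma^{-}(C_n^3)\le 12$ is exactly Theorem~\ref{Thm-upper bound on rna no} specialised to $d=3$, so the entire task is the matching lower bound $\sigma^{-}(C_n^3)\ge 12$. The plan is to fix an arbitrary equicut $(X,X^c)$ with $|X|=\lfloor \tfrac{n}{2}\rfloor$ and to exploit that $C_n^3=C(n,\{1,2,3\})$ is the edge-disjoint union of its jump-$1$, jump-$2$ and jump-$3$ spanning subgraphs $G_1,G_2,G_3$, where $G_s$ is a disjoint union of $\gcd(n,s)$ cycles covering $V(C_n^3)$. Writing $T_s:=|(X,X^c)\cap E(G_s)|$, the size of the cut is $T_1+T_2+T_3$, and since an edge-cut meets every cycle in an even number of edges, each $T_s$ is even. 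A preliminary observation is that $T_3\ge 2$ always: $T_3=0$ would make $X$ a union of $G_3$-components, whose sizes are multiples of $n/\gcd(n,3)$ and hence can never equal $\lfloor \tfrac{n}{2}\rfloor$ when $n\ge 8$. The goal is thus to prove $T_1+T_2+T_3\ge 12$.

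The organising parameter is $t$, the number of maximal arcs into which $X$ splits the cyclic order $u_0,u_1,\ldots,u_{n-1}$, so that $T_1=2t$. Several regimes are immediate. If $t=1$, then $X$ is a single arc of $\lfloor \tfrac{n}{2}\rfloor\ge 4$ consecutive vertices, both the $X$-block and the $X^c$-block have length exceeding $3$, each boundary contributes exactly $s$ crossing jump-$s$ edges, and $T_1+T_2+T_3=2+4+6=12$; this is precisely the extremal configuration behind the upper bound. If $t\ge 6$ then $T_1=2t\ge 12$. If $t=5$ then $T_1+T_3\ge 10+2=12$. If $t=4$ then either $T_2\ge 2$, giving $T_1+T_2+T_3\ge 8+2+2=12$, or $T_2=0$, which forces the indicator $\chi$ of $X$ to be $2$-periodic, i.e. $X$ is a parity class with $n$ even and $t=n/2=4$; then $n=8$ and $T_3=8$, so the total is $16$.

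What remains is $t\in\{2,3\}$, where I would refine the count of $T_2$. Introducing the boundary indicator $\beta_p=1$ exactly when $u_{p-1},u_p$ lie on opposite sides, the cyclic sequence $\beta$ has $2t$ ones, and $T_2$ equals its number of transitions. Hence $T_2=2$ can occur only if all $2t$ boundaries are consecutive; tracing the resulting alternation shows this forces the smaller part to have exactly $t$ vertices, i.e. $\lfloor \tfrac{n}{2}\rfloor=t\le 3$ and so $n\le 7$, which is excluded. Therefore $T_2\ge 4$ for both $t=2$ and $t=3$. For $t=3$ this already gives $T_1+T_2+T_3\ge 6+4+2=12$. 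For $t=2$ it reduces the problem to the single delicate assertion that $T_3\ge 4$ whenever $X$ consists of two arcs.

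That final assertion is the main obstacle. Equivalently, one must exclude $T_3=2$, and the key structural fact is that $T_3=2$ forces $X$ to be a single arc of the jump-$3$ cyclic order $u_0u_3u_6\cdots$. When $3\nmid n$ this order is one $n$-cycle, so $X$ is an arithmetic progression of common difference $3$ and length $\lfloor \tfrac{n}{2}\rfloor$ in $\Zl_n$; two of its members are adjacent in the natural order only when their positions along the progression differ by $3^{-1}\pmod{n}$, and counting such coincidences shows the set must break into far more than two natural arcs, contradicting $t=2$. When $3\mid n$ the jump-$3$ order splits into three residue-class cycles of length $n/3$, and $T_3=2$ means one of them meets both sides while the other two are monochromatic; as a full residue class consists of $n/3$ vertices spaced $3$ apart, $X$ again decomposes into many natural arcs. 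Turning this dichotomy into a clean bound valid for all $n\ge 8$ is where the genuine effort is concentrated. Granting $T_3\ge 4$ in the case $t=2$, we obtain $T_1+T_2+T_3\ge 4+4+4=12$ there as well, so in every case $T_1+T_2+T_3\ge 12$; hence $\sigma^{-}(C_n^3)\ge 12$, and together with the upper bound this gives $\sigma^{-}(C_n^3)=12$.
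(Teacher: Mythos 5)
Your framework is sound and genuinely different from the paper's: you organize the lower bound by $t$, the number of maximal arcs of $X$ in the natural cyclic order (so $T_1=2t$), and use the parity of each $T_s$ together with the transition count of the boundary sequence $\beta$ to control $T_2$, whereas the paper cases on the residues of $n$ modulo $2$ and $3$ and, in each regime, exhibits explicit lists of cut edges once some jump cycle is met in exactly two edges. Your cases $t=1$, $t=3$, $t=4$ and $t\ge 5$ are all correct, and the argument that $T_2\ge 4$ for $t\in\{2,3\}$ (since $T_2=2$ would force the smaller side to have exactly $t\le 3$ vertices, and $T_2=0$ would force a parity class with $t=n/2\ge 4$) is a clean replacement for the paper's edge-by-edge bookkeeping.

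However, there is a genuine gap, and you have flagged it yourself: the assertion that $T_3\ge 4$ whenever $t=2$ is only sketched, and you conclude with ``granting $T_3\ge 4$.'' This is not a minor omission --- it is the one configuration where your bound is tight in the other two jump classes ($T_1+T_2=8$), so the whole theorem rests on it. The claim is true and your strategy is viable: $T_3=2$ forces $X$ to be a single arc of one component of the jump-$3$ subgraph, with the remaining components monochromatic when $3\mid n$. But ``counting such coincidences shows the set must break into far more than two natural arcs'' is exactly the computation that still has to be carried out. When $3\nmid n$ one must bound the number of indices $i\in\{0,\ldots,m-1\}$ along the progression $u_j,u_{j+3},\ldots$ for which the next term in the natural order also lies in the progression, and show this leaves more than two arcs for every $n\ge 8$; when $3\mid n$ one must use the count $|X|\in\{r,\,n/3+r,\,2n/3+r\}$ with $|X|=\lfloor n/2\rfloor$ to conclude that a full residue class (hence at least $n/3\ge 3$ singleton arcs) is contained in one side. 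Until that is written out uniformly for all $n\ge 8$, the lower bound is not established; this is precisely the part of the argument to which the paper devotes its Cases 2.2, 4.2 and 4.3.
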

\begin{proof} 
Let $X$ be a subset of $V(C_n^3)$ such that $|X|=\lfloor \frac{n}{2} \rfloor$. The number of cycles of different lengths (formed with 2 jumps and 3 jumps) in $C_n^3$ will depend upon the values of $n$. So, we discuss the following cases. 

\vspace{0.1in}
\noindent
\textit{\textbf{Case 1:}} Let $n$ be an even and a multiple of 3. In this case, $n \geq 12$ and $C_n^3$ is edge-disjoint union of one $n$-cycle (formed with 1 jumps), two $\frac{n}{2}$-cycles (formed with 2 jumps) and three $\frac{n}{3}$-cycles (formed with 3 jumps). We denote these $\frac{n}{2}$-cycles by $C_{\frac{n}{2}}^{1}$ and $C_{\frac{n}{2}}^{2}$, and $\frac{n}{3}$-cycles by $C_{\frac{n}{3}}^{1}$, $C_{\frac{n}{3}}^{2}$ and $C_{\frac{n}{3}}^{3}$. It is important to note that for any $i \in \{1,2,3\}$, each vertex of $V(C_n) \setminus V(C_{\frac{n}{3}}^{i})$ has exactly two neighbours in $V(C_{\frac{n}{3}}^{i})$. So if $V(C_{\frac{n}{3}}^{i}) \subset X$, then each vertex of $X^c$ contributes at least two edges in $(X,X^c)$. Hence $|(X,X^c)| \geq 2 \times \lceil \frac{n}{2} \rceil \geq n$. Thus $|(X,X^c)| \geq 12$. 

If $X=V(C_{\frac{n}{2}}^{i})$ for any $i \in \{1,2\}$, then $E(C_n) \subseteq (X,X^c)$. Thus $|(X,X^c)| \geq n \geq 12$. 

Next, if $X$ contain vertices of $C_{\frac{n}{2}}^{i}$, and $C_{\frac{n}{3}}^{j}$, for each $i \in \{1,2\}$ and $j \in \{1,2,3\}$, then $X$ has to contain vertices of all 6 cycles. Consequently, $|(X,X^c)| \geq 12$.

\vspace{0.1in}
\noindent
\textit{\textbf{Case 2:}} Let $n$ be an even and not a multiple of 3. That is, $n=2k$ for some $k \geq 4$ and $k$ is not divisible by 3. In this case, $n \geq 8$. For any $X \subset V(C_{n})$ such that $|X|=k$ there are only two choices: (i) $X$ contains all the vertices of exactly one of the cycles (formed with 2 jumps) of length $k$, or (ii) $X$ contains vertices of both the cycles of length $k$. We discuss these two cases separately. 

\textit{\textbf{Case 2.1:}} Without loss of generality, we assume that $X= \{u_{1},u_{3}, \ldots , u_{2k-1}\}$. That is, $X$ contains all the vertices of exactly one of the cycles of length $k$. Note that $$(X,X^c) = \{u_{i}u_{i+1}, u_{i}u_{i+3}~|~i\in \{0,1, \ldots , 2k-1\}\}.$$ Consequently, $|(X,X^c)| = 2n \geq 16$ because $n \geq 8$.

\textit{\textbf{Case 2.2:}} If $X$ contains vertices of both the cycles of length $k$, then $(X,X^c)$ contains at least 8 edges. This holds true because in this case, there are total four cycles in $C_n^3$ and vertices of each of these four cycles belong to both $X$ and $X^c$. 

Now it is important to observe that any cut $(X,X^c)$ of a cycle on $n$ vertices with $|(X,X^c)|=2$ must correspond to a vertex set $X = \{u_{i},u_{i+1}, \ldots , u_{i+ \lfloor \frac{n}{2} \rfloor -1}\}$ for some $i \in \{0,1, \ldots , n-1\}$. Therefore, if $(X,X^c)$ contains exactly two edges of an $n$-cycle (formed with 1 jumps), then $X$ must equals $\{u_{i},u_{i+1}, \ldots , u_{i+ \lfloor \frac{n}{2} \rfloor -1}\}$. Hence $(X,X^c)$ has to contain the edges $u_{i-2}u_{i},u_{i-1}u_{i+1},u_{i+ \lfloor \frac{n}{2} \rfloor -1}u_{i+ \lfloor \frac{n}{2} \rfloor +1}, \linebreak[4] u_{i+ \lfloor \frac{n}{2} \rfloor -2}u_{i+ \lfloor \frac{n}{2} \rfloor},u_{i-3}u_{i}$, $u_{i-2}u_{i+1},u_{i-1}u_{i+2},u_{i+ \lfloor \frac{n}{2} \rfloor -3}u_{i+ \lfloor \frac{n}{2} \rfloor},u_{i+ \lfloor \frac{n}{2} \rfloor -2}u_{i+ \lfloor \frac{n}{2} \rfloor +1},u_{i+ \lfloor \frac{n}{2} \rfloor -1}u_{i+ \lfloor \frac{n}{2} \rfloor +2}$ along with those two edges of $n$-cycle. Thus, $|(X,X^c)| \geq 12$. 

Similarly, if $(X,X^c)$ contains exactly two edges of an $n$-cycle (formed with 3 jumps), then $|(X,X^c)| \geq 12$. Thus, we conclude that if $(X,X^c)$ contains exactly two edges of any one of $n$-cycles (formed with 1 jumps or 3 jumps), then $|(X,X^c)| \geq 12$.

On the other hand, if $(X,X^c)$ contains at least 4 edges of both $n$-cycles, then $|(X,X^c)| \geq 12$ because both $k$-cycles contribute at least 4 edges in $(X,X^c)$.

\vspace{0.1in}
\noindent
\textit{\textbf{Case 3:}} Let $n$ be an odd and divisible by 3. That is, $n=2k+1$ for some $k \geq 4$ and is divisible by 3. Here $C_{n}^3$ has total 5 cycles. Out of these 5 cycles, two are $n$-cycles (formed with 1 jumps and 2 jumps) and three are $\frac{n}{3}$-cycles. Let $X$ be any subset of $V(C_n)$ such that $|X|=k$. As discussed in Case 1, if $X$ contains all the vertices of any one of $\frac{n}{3}$-cycles, then $|(X,X^c)| \geq 12$. 

On the other hand, if $X$ contains vertices of each of 5 cycles, then it is clear that $|(X,X^c)| \geq 10$. Moreover this equality holds only if $(X,X^c)$ contains exactly two edges of each of these 5 cycles. Now we distinguish the following cases.

\textit{\textbf{Case 3.1:}} If $(X,X^c)$ contains exactly two edges of the cycle $u_{0}u_{1}\ldots u_{n-1}u_{0}$, then $X$ must equals $\{u_{i},u_{i+1}, \ldots , u_{i+k-1}\}$ for some $i \in \{0,1,\ldots , n-1\}$. Consequently, $(X,X^c)= \{u_{i-1}u_{i},u_{i-1}u_{i+1},u_{i-1}u_{i+2}, \linebreak[4] u_{i-2}u_{i},u_{i-2}u_{i+1},u_{i-3}u_{i}, u_{i+k-3}u_{i+k},u_{i+k-2}u_{i+k},u_{i+k-2}u_{i+k+1},u_{i+k-1}u_{i+k},u_{i+k-1}u_{i+k+1},u_{i+k-1}u_{i+k+2}\}.$ Hence $|(X,X^c)| = 12$.

\textit{\textbf{Case 3.2:}} If $(X,X^c)$ contains at least four edges of the cycle $u_{0}u_{1}\ldots u_{n-1}u_{0}$, then $|(X,X^c)| \geq 12$. Therefore, we conclude that $|(X,X^c)| \geq 12$.   

\vspace{0.1in}
\noindent
\textit{\textbf{Case 4:}} Let $n$ be an odd and not divisible by 3. Note that if $n=2k+1$ is odd and not divisible by 3, then $n$ must be of the form $3 \ell -1$ or $3 \ell-2$, according as $\ell$ is even or odd, respectively. 

In this case, $C_{n}^{3}$ has total 3 $n$-cycles. We denote these cycles by $C_{n1}$, $C_{n2}$, and $C_{n3}$, where $C_{ni}$ is formed with $i$ jumps for $i=1,2,3$. Clearly for any $X \subset V(C_n)$ with $|X| = \lfloor \frac{n}{2} \rfloor$, the cut $(X,X^c)$ contains at least 2 edges of each of $C_{ni}$ for $i=1,2,3$. Thus $|(X,X^c)| \geq 6$. But we claim that for each $X \subset V(C_n)$ with $|X| = \lfloor \frac{n}{2} \rfloor$, the cut $(X,X^c)$ contains at least 4 edges of each of $C_{ni}$ for $i=1,2,3$. This would prove that $|(X,X^c)| \geq 12$. To justify this claim, we discuss the following cases.

\textit{\textbf{Case 4.1:}} If $(X,X^c)$ contains exactly 2 edges of $C_{n1}$, then as discussed in Case 3.1, we have $|(X,X^c)| = 12$, as desired.

\textit{\textbf{Case 4.2:}} If $(X,X^c)$ contains exactly 2 edges of $C_{n2}$, then $X$ must equals $\{u_{i},u_{i+2}, \ldots ,u_{i+2k-2}\}$ for some $i \in \{0,1, \ldots , n-1\}$. So we have $(X,X^c)= E(C_{n1}) \cup \{u_{i-2}u_{i}, u_{i+2k-2}u_{i+2k}\} \cup S - \{u_{i+(2k-1)}u_{i+2k}\}$, where $S \subset E(C_{n3})$ and $|S| \geq 0$. Thus $|(X,X^c)| \geq n+2-1 = n+1 \geq 12$, because $n \geq 11$.

\textit{\textbf{Case 4.3:}} Let $(X,X^c)$ contains exactly 2 edges of $C_{n3}$. If $n = 3 \ell -1$ and $\ell \geq 4$ is even, then $$X = \{u_{i},u_{i+3}, \ldots ,u_{i+3(\ell -1)},u_{i+1}, \ldots, u_{i+3(\frac{\ell}{2} -1)-2}\}$$ so that $|X|= \ell +\frac{\ell}{2} -1 = \frac{3 \ell}{2} -1$. Consider a set $$A = 
\begin{cases} 
\{u_{i+2},u_{i+5},u_{i+7},u_{i+8},u_{i+10}\}~~ \text{if $\ell = 4$}\\
\{u_{i+2},u_{i+5},u_{i+8},u_{i+3\ell-4},u_{i+3\ell-2}\}~~ \text{if $\ell \geq 6$ is even}.\\
\end{cases}$$

Observe that $A \subset X^c$. Hence, for $\ell =4$, the edges $u_{i}u_{i+2},u_{i}u_{i+8},u_{i}u_{i+10},u_{i+3}u_{i+5},u_{i+3}u_{i+2},u_{i+6}u_{i+5}, \linebreak[4] u_{i+6}u_{i+7},u_{i+6}u_{i+8},u_{i+9}u_{i+8},u_{i+9}u_{i+7},u_{i+9}u_{i+10},u_{i+1}u_{i+2},u_{i+1}u_{i+10}$ must belong to the cut $(X,X^c)$. Thus $|(X,X^c)| > 12$. For $\ell \geq 6$, the edges $u_{i}u_{i+2},u_{i}u_{i+3\ell -2},u_{i}u_{i+3\ell -4},u_{i+3}u_{i+2},u_{i+3}u_{i+5},u_{i+6}u_{i+5}, \linebreak[4] u_{i+6}u_{i+8},u_{i+9}u_{i+8}, u_{i+3\ell -3}u_{i+3\ell -4},u_{i+3\ell -3}u_{i+3\ell -2},u_{i+1}u_{i+2},u_{i+4}u_{i+2},u_{i+4}u_{i+5}$ must belong to $(X,X^c)$. Thus $|(X,X^c)| > 12$.

Now if $n = 3 \ell -2$ and $\ell \geq 5$ is odd, then $$X = \{u_{i},u_{i+3}, \ldots ,u_{i+3(\ell -1)},u_{i+2}, \ldots, u_{i+3(\lfloor \frac{\ell}{2} \rfloor -1)-1}\}$$ so that $|X|= \ell +\lfloor \frac{\ell}{2} \rfloor -1 = \lfloor \frac{3 \ell}{2} \rfloor -1$. Here $X^c$ must contain the vertices $u_{i+1},u_{i+4},u_{i+7},u_{i+3 \ell-5},$ and $u_{i+3 \ell -4}$. So the edges $u_{i}u_{i+1},$ $u_{i}u_{i+3 \ell-4},$ $u_{i+3}u_{i+1},$ $u_{i+3}u_{i+4},$ $u_{i+6}u_{i+4},$ $u_{i+6}u_{i+7},$ $u_{i+9}u_{i+7},$ $u_{i+3 \ell-6}u_{i+3 \ell-5}, \linebreak[4] u_{i+3 \ell-6}u_{i+3 \ell-4},u_{i+3 \ell-3}u_{i+3 \ell-5},u_{i+3 \ell-3}u_{i+3 \ell-4},u_{i+3 \ell-3}u_{i+1}$ must belong to $(X,X^c)$. Thus $|(X,X^c)| > 12$.

From all the above four cases, we conclude that for any $X \subset V(C_{n}^{3})$ with $|X| = \lfloor \frac{n}{2} \rfloor$, we have $|(X,X^c)| \geq 12$. Further, if $X = \{u_0, u_1, \ldots, u_{\lfloor \frac{n}{2} \rfloor -1}\}$, then $(X,X^c) = \{u_{n-1}u_{0},u_{n-1}u_{1},u_{n-1}u_{2}, u_{n-2}u_{0}, \linebreak[4] u_{n-2}u_{1}, u_{n-3}u_{0},u_{\lfloor \frac{n}{2} \rfloor -3}u_{\lfloor \frac{n}{2} \rfloor},u_{\lfloor \frac{n}{2} \rfloor -2}u_{\lfloor \frac{n}{2} \rfloor },u_{\lfloor \frac{n}{2} \rfloor -2}u_{\lfloor \frac{n}{2} \rfloor +1},u_{\lfloor \frac{n}{2} \rfloor -1}u_{\lfloor \frac{n}{2} \rfloor},u_{\lfloor \frac{n}{2} \rfloor -1}u_{\lfloor \frac{n}{2} \rfloor +1},u_{\lfloor \frac{n}{2} \rfloor-1}u_{\lfloor \frac{n}{2} \rfloor +2}\}$. So we have $|(X,X^c)| = 12$. Consequently, $\sigma^{-}(C_n^3) = 12$. This completes the proof.
\end{proof}

In this paper, we obtained an upper bound of $d(d+1)$ for the \emph{rna} number of $C_n^{d}$ for $2 \leq d < \lfloor \frac{n}{2} \rfloor$ and proved that the \emph{rna} numbers of $C_n^{2}$ and $C_n^{3}$ are 6 and 12, respectively. We were not able to calculate the \emph{rna} number of $C_n^{d}$ for $4 \leq d < \lfloor \frac{n}{2} \rfloor$. So for these values of $d$ we make the following conjecture.

\begin{conjecture}
If $4 \leq d < \lfloor \frac{n}{2} \rfloor$, then $\sigma^{-}(C_n^{d}) = d(d+1).$
\end{conjecture}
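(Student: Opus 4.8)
The upper bound $\sigma^{-}(C_n^d) \le d(d+1)$ is already furnished by Theorem~\ref{Thm-upper bound on rna no}, so the entire content of the conjecture is the matching lower bound $\sigma^{-}(C_n^d) \ge d(d+1)$. The plan is to recast this as an edge-isoperimetric statement. Fix $m = \lfloor \frac{n}{2} \rfloor$ and let $X \subseteq V(C_n^d)$ with $|X| = m$. Since $d < \lfloor \frac{n}{2} \rfloor$, the graph $C_n^d = C(n,\{1,\dots,d\})$ is $2d$-regular, so writing $e(X)$ for the number of edges of $C_n^d$ with both ends in $X$ we obtain the identity $|(X,X^c)| = \sum_{v \in X}(2d - \deg_X(v)) = 2dm - 2e(X)$. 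For an arc $X = \{u_0, u_1, \dots, u_{m-1}\}$ of $m \ge d+1$ consecutive vertices one computes $e(X) = \sum_{i=1}^{d}(m-i) = dm - \binom{d+1}{2}$, which returns $|(X,X^c)| = d(d+1)$, in agreement with Theorem~\ref{Thm-upper bound on rna no}. Consequently the lower bound is \emph{equivalent} to the inequality $e(X) \le dm - \binom{d+1}{2}$ for every $m$-subset $X$; that is, among all subsets of a fixed size, an arc of consecutive vertices maximizes the number of induced edges.

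To prove this edge-isoperimetric inequality I would argue by compression. Call a maximal cyclic run of vertices of $X$ a \emph{block}, so that an arc is precisely a set with a single block. Among all $m$-subsets attaining the maximum value of $e$, choose one, $X$, with the fewest blocks, and suppose for contradiction that it has at least two. Then $X$ has a block $B$ whose two neighbouring $X^c$-blocks are nonempty; I would slide $B$ by one step towards an adjacent block of $X$ (equivalently, delete the trailing vertex of one block and append a vertex to the leading end of the next) and track the resulting change $\Delta e$. When the blocks involved are long (length $\ge d$) this move is transparent: the edges straddling a \emph{clean} boundary, one flanked on both sides by runs of length $\ge d$, are exactly the $\binom{d+1}{2}$ pairs $\{u_a, u_b\}$ with $a < b \le a+d$ that cross it, all of which are cut edges, so two well-separated clean boundaries already account for $d(d+1)$ cut edges and the arc value is forced. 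The compression is designed so that it either strictly decreases the number of blocks while keeping $e$ unchanged, contradicting minimality, or strictly increases $e$, contradicting maximality.

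The hard part will be the bookkeeping for $\Delta e$ when $X$ contains \emph{short} blocks, of length less than $d$. In that regime the jump-$i$ neighbourhoods (for $i$ up to $d$) of the vertex being moved overlap several distinct blocks and may wrap across short gaps, so the local edge balance is no longer a clean telescoping sum and its sign is not manifestly nonnegative; this is exactly the point at which the proofs of Theorems~\ref{Thm: rna no of C_n^2} and~\ref{Thm: rna no of C_n^3} fragment into many subcases (Cases~2--4 there), and where a uniform argument valid for all $d$ is still missing. I would attack it through the complementary principle that short blocks are \emph{expensive}: a block of length $\ell \le d$ surrounded by vertices of $X^c$ is incident to about $2d\ell - 2\binom{\ell}{2}$ cut edges, which is large for small $\ell$, so any configuration carrying a short block should be shown to already exceed $d(d+1)$, while configurations with only long blocks reduce, via the clean-boundary count above, to the two-boundary arc. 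Quantifying this trade-off uniformly in $d$ — ideally by exhibiting a monotone potential, or an explicit convexity bound on $\sum_{i=1}^{d} |(u_j, X^c)|$ weighted by the block-length distribution — is the crux; once it is in hand the bound $|(X,X^c)| \ge d(d+1)$ follows for all equicuts, and together with Theorem~\ref{Thm-upper bound on rna no} yields $\sigma^{-}(C_n^d) = d(d+1)$.
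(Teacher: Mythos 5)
First, note that the statement you are proving is stated in the paper only as a conjecture: the authors explicitly say they were unable to compute $\sigma^{-}(C_n^{d})$ for $4 \leq d < \lfloor \frac{n}{2} \rfloor$, so there is no proof in the paper to compare against. Your proposal does not close that gap either, and you are candid about this. What you do establish is correct and worthwhile as a reformulation: since $C_n^{d}$ is $2d$-regular, $|(X,X^c)| = 2d\lfloor \frac{n}{2}\rfloor - 2e(X)$, so the conjecture is exactly equivalent to the edge-isoperimetric statement that among all $\lfloor \frac{n}{2}\rfloor$-subsets of $\mathbb{Z}_n$ an arc of consecutive vertices maximizes the number of induced edges of $C(n,\{1,\ldots,d\})$, the arc value being $d\lfloor \frac{n}{2}\rfloor - \binom{d+1}{2}$. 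That identity and the arc computation check out, and the observation that a ``clean'' boundary flanked by runs of length at least $d$ carries exactly $\binom{d+1}{2}$ cut edges correctly disposes of the case where all blocks and all gaps have length at least $d$.

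The genuine gap is the compression step, and it is not a routine omission: it is the entire content of the conjecture. You never compute $\Delta e$ for the sliding move when short blocks or short gaps are present, which is precisely the regime where jump-$i$ edges straddle several blocks and the telescoping breaks down -- the same regime that forces the case explosion in Theorems~\ref{Thm: rna no of C_n^2} and~\ref{Thm: rna no of C_n^3}. Moreover, your fallback heuristic that ``short blocks are expensive'' is quantitatively wrong as stated: an isolated block of length $\ell$ with long gaps contributes $\ell(2d+1-\ell)$ cut edges, which for $\ell=1$ is only $2d < d(d+1)$ when $d \geq 2$, so a single short block does not by itself exceed the target and one must aggregate contributions over all blocks (and handle short gaps, where the per-block counts are no longer disjointly additive in this simple form). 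Until the inequality $e(X) \leq d\lfloor \frac{n}{2}\rfloor - \binom{d+1}{2}$ is proved uniformly in $d$ for arbitrary block/gap profiles, the proposal is a plausible programme rather than a proof, and the statement should remain labelled as a conjecture.
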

 
%\section{Declarations} 
%
%\textbf{Conflict of interests}
%The authors have no conflict of interests to disclose.
%
%\vspace{0.01in}
%\noindent
%\textbf{Data availability}
%The authors confirm that no data were used during the current study.

\end{document}